\documentclass{amsart}
\title[On Cayley numbers]{Cayley numbers with arbitrarily many distinct prime factors}

\author[T. Dobson]{Ted Dobson}
\address{Ted Dobson, Mississippi State University,  \newline
PO Drawer MA, Mississippi State, MS 39762, USA, and \newline
University of Primorska, IAM, Muzejeska trg 2, 6000 Koper, Slovenia}
\email{dobson@math.msstate.edu}

\author[P. Spiga]{Pablo Spiga}
\address{Pablo Spiga, Departimento di Matematica Pura e Applicata,\newline
 University of Milano-Bicocca, Via Cozzi 55, 20126 Milano, Italy.}
\email{pablo.spiga@unimib.it}

\usepackage{amssymb,latexsym,amsthm,amssymb,amsfonts,amsmath,amsopn,amstext,amscd,latexsym}

\newtheorem{thrm}{Theorem}[section]
\newtheorem{lemma}[thrm]{Lemma}
\newtheorem{cor}[thrm]{Corollary}
\newtheorem{prob}[thrm]{Problem}

\numberwithin{equation}{section}
\usepackage{cite}
\usepackage{pdfsync}
\usepackage{color}
\newcommand{\Aut}{\mathop{\textrm{Aut}}}
\newcommand{\Cay}{\mathop{\textrm{Cay}}}
\def\PSL{{\rm PSL}}
\def\soc{{\rm soc}}

\def\cal{\mathcal}

\usepackage{hyperref}

\begin{document}

\keywords{vertex-transitive, Cayley graph, Cayley number, semiregular}
\subjclass[2010]{Primary 20B25; Secondary 05E18}

\begin{abstract}
A positive integer $n$ is a Cayley number if every vertex-transitive graph of order $n$ is  a Cayley graph.  In 1983, Dragan Maru\v si\v c posed the problem of determining the Cayley numbers.  In this paper we give an infinite set $S$ of primes such that every finite product of distinct elements from $S$ is a Cayley number.  This answers a 1996 outstanding question of Brendan McKay and Cheryl Praeger, which they ``believe to be the key unresolved question" on Cayley numbers.
 We also show that, for every finite product $n$  of distinct elements from $S$, every transitive group of degree $n$ contains a semiregular element.
\end{abstract}

\maketitle

\section{Introduction}\label{sec:intro}

In this paper, all groups considered are finite and all graphs and digraphs are finite and have no multiple edges or arcs. (They may have loops and they may be disconnected.) A (di)graph $\Gamma$ is {\bf vertex-transitive} if the automorphism group $\Aut(\Gamma)$ of $\Gamma$ acts transitively on the vertices of $\Gamma$. Let $R$ be a group and let $S$ be a subset of $R$, the {\bf Cayley digraph} on $R$ with connection set $S$, denoted $\Cay(R,S)$, is the digraph with vertex-set $R$ and with $(g,h)$ being an arc if and only if $gh^{-1}\in S$. It is easy to see that $\Cay(R,S)$ is a graph if and only if $S$ is inverse-closed, that is, $S=\{s^{-1}\mid s\in S\}$. It is also clear that the action of $R$ on itself by right multiplication gives rise to a group of automorphisms of $\Cay(R,S)$ which is transitive on its vertex-set; thus $\Cay(R,S)$ is vertex-transitive. With a slight abuse of terminology, we say that a (di)graph $\Gamma$ is a Cayley (di)graph, if $\Gamma$ is isomorphic to some Cayley (di)graph: it is well-known and easy to prove~\cite[Lemma~$4$]{Sabidussi1958} that this happens if and only if $\Aut(\Gamma)$ contains  a subgroup $R$ with $R$ transitive on the vertices of $\Gamma$ and with the identity being the only element of $R$ fixing some vertex of $\Gamma$.

A positive integer $n$ is called a {\bf Cayley number} if every vertex-transitive graph of order $n$ is a Cayley graph.
In 1983, Dragan Maru\v si\v c~\cite{Marusic1983} asked for a concrete determination of the positive integers $n$ for which every vertex-transitive graph of order $n$ is a Cayley graph, that is, Maru\v si\v c posed the problem of determining Cayley numbers. Clearly, every prime number is a Cayley number, and $10$ is not a Cayley number because the Petersen graph is not a Cayley graph.

The question of Maru{\v{s}}i{\v{c}} has generated a fair amount of interest, with most  results giving explicit numbers that are not Cayley.  In fact, it seems to be easier to provide integers that are not Cayley numbers: to show that $n$ is not a Cayley number it suffices to exhibit one single vertex-transitive graph of order $n$ that is not a Cayley graph, but to show that $n$ is a Cayley number one needs to prove that each vertex-transitive graph of order $n$ is a Cayley graph.

It is useful to observe that the integers which are not Cayley numbers, called {\bf non-Cayley numbers}, are closed under multiplication, that is, if $n$ is a non-Cayley number and $m$ is a positive integer, then $nm$ is also a non-Cayley number.  This is easy to see as, if $\Gamma$ is a graph of order $n$ that is not a Cayley graph, then for every positive integer $m$ the disjoint union of $m$ copies of $\Gamma$ yields a graph of order $nm$ that is also not  a Cayley graph.  The reader is referred to~\cite{McKayP1994,McKayP1996,Seress1998,LiS2005} for many examples of non-Cayley numbers and to~\cite{Dobson2000,Dobson2008,Marusic1983,HassaniIP1998,IranmaneshP2001} for work towards establishing that certain integers are Cayley numbers.


In this paper, we answer a question of Brendan McKay and Cheryl Praeger~\cite[Question]{McKayP1996} that they ``believe to be the key unresolved question'' concerning the structure of the set of Cayley numbers; namely, there exist Cayley numbers that are product of arbitrarily many distinct primes.

\begin{thrm}\label{main1}There exists an infinite set $S$ of primes such that for every finite product $n$ of distinct elements from $S$, every vertex-transitive digraph of order $n$ is a Cayley digraph. Consequently, $n$ is a Cayley number and  there exist
Cayley numbers that are a product of arbitrarily many distinct primes.
\end{thrm}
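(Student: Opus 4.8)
The plan is to translate the statement into a purely group-theoretic assertion and then establish it by induction on the number of prime divisors of $n$. By Sabidussi's criterion recalled above, a vertex-transitive digraph $\Gamma$ of order $n$ is a Cayley digraph precisely when $\Aut(\Gamma)$ contains a regular subgroup, that is, a transitive subgroup meeting every point-stabiliser trivially. Since $\Aut(\Gamma)$ is a transitive permutation group of degree $n$, it suffices to prove that, for a suitably chosen infinite set $S$ of odd primes and every product $n$ of distinct elements of $S$, \emph{every transitive group of degree $n$ contains a regular subgroup}. This single assertion implies both conclusions of the theorem (and is exactly the form needed for the semiregular statement); moreover, for the Cayley statement one has the extra leverage that the relevant groups are automorphism groups of digraphs, hence $2$-closed. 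I induct on the number $k$ of prime factors of $n$. The base case $k=1$, with $n=p$ prime, is immediate: by Cauchy's theorem a transitive group of degree $p$ contains an element of order $p$, necessarily a $p$-cycle, so $\Z_p$ is a regular subgroup.

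The set $S$ must be chosen so that primitive groups of degree dividing $n$ are as constrained as possible. Using Burnside's theorem on transitive groups of prime degree together with the classification of $2$-transitive groups (and hence the classification of finite simple groups), I would take $S$ to consist of those odd primes $p$ for which every primitive group of degree $p$ is either contained in $\mathrm{AGL}(1,p)=\Z_p\rtimes\Z_{p-1}$ or contains $\mathrm{Alt}(p)$; concretely this means excluding the \emph{projective degrees} $(q^d-1)/(q-1)$ with $q$ a prime power and $d\ge 2$, together with the finitely many sporadic $2$-transitive degrees such as $11$ and $23$. A counting argument shows the excluded primes are sparse, so $S$ is infinite. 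With $S$ so defined, I treat the quasiprimitive case (where every nontrivial normal subgroup of $G$ is transitive) by invoking the classification of quasiprimitive, and in particular primitive, groups of square-free degree: for a composite divisor $d\mid n$ the only such groups are $\mathrm{Alt}(d)$ and $\mathrm{Sym}(d)$ in their natural action, while for $d=p$ prime one also has the affine groups $\Z_p\rtimes H$. In each case a regular cyclic subgroup is present, since the affine groups contain $\Z_p$ and, as $n$ (hence $d$) is odd, a $d$-cycle is an even permutation and so lies in $\mathrm{Alt}(d)$.

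For the genuinely imprimitive case I would argue through an intransitive minimal normal subgroup $N$ of $G$. Because $n$ is square-free, the orbits of $N$ form a block system whose common block length divides $n$; as $N$ is a $p$-group or a product of isomorphic nonabelian simple groups, this length is forced to be a single prime $p\mid n$. The decisive favourable case is $N\cong\Z_p$: then $N$ is a normal, semiregular subgroup of order $p$, the quotient $G/N$ acts faithfully and transitively on the $n/p$ blocks, and by the induction hypothesis (applied in degree $n/p$, a product of $k-1$ distinct elements of $S$) it contains a regular subgroup $\bar R$. The full preimage $R\le G$ of $\bar R$ then has order $p\cdot(n/p)=n$ and is transitive, hence regular, which closes the induction. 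The same reduction, carried out with a semiregular subgroup found on the blocks rather than a normal one, delivers the semiregular-element conclusion.

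I expect the main obstacle to lie in the remaining possibilities for $N$, where the clean lifting above is unavailable. When $N\cong\Z_p^{a}$ with $a>1$, minimality of $N$ leaves no proper $G$-invariant subgroup to serve as a normal $\Z_p$, and $N$ itself fails to be semiregular; here one must exploit the detailed module structure, namely that the point-stabilisers in $N$ form a $G$-invariant family of hyperplanes of $\mathbb{F}_p^{a}$ with trivial intersection, and combine this with the square-freeness of the degree to construct a regular subgroup directly (the $2$-closedness of $G$ providing additional control when $G=\Aut(\Gamma)$). When $N\cong T^{r}$ with $T$ nonabelian simple, one needs the arithmetic conditions defining $S$ to bound, via the classification of finite simple groups, which $T$ can admit a subgroup of square-free index dividing $n$; it is precisely here that the choice of $S$ does its essential work, excluding the configurations that would otherwise obstruct the construction of a regular subgroup and thereby completing the induction.
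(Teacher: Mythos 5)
Your plan diverges from the paper's route (which, after establishing that the only quasiprimitive groups of the relevant degrees are $A_n$ and $S_n$, simply quotes~\cite[Corollary~4.10]{DobsonM2011} to convert this into the Cayley digraph statement), and as written it has genuine gaps. The most serious one you name yourself: the inductive step is only carried out when the intransitive minimal normal subgroup $N$ is $\Z_p$, while the cases $N\cong\Z_p^a$ with $a>1$ (where $N$ is not semiregular and there is no normal $\Z_p$ to factor out) and $N\cong T^r$ with $T$ nonabelian simple are left as things ``one must'' do. These are precisely the hard cases --- handling them is the content of the Dobson--Maru\v si\v c machinery the paper invokes --- so the argument is not a proof. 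Even in the favourable case $N\cong\Z_p$ the lifting is not airtight: the kernel $K$ of the action of $G$ on the $N$-orbits may properly contain $N$, so $G/N$ need not act faithfully on the blocks, and the preimage in $G$ of a regular subgroup of $G/K$ has order $|K|\cdot(n/p)$, which is regular only when $K=N$.

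A second, independent gap concerns the choice of $S$. You impose arithmetic conditions prime by prime (each $p\in S$ should not be a projective degree $(q^d-1)/(q-1)$, nor $11$ or $23$, and so on), but to rule out unwanted primitive and quasiprimitive groups of \emph{composite} degree $n=p_1\cdots p_\kappa$ one needs conditions on the products: a product of ``good'' primes can still equal $(q^d-1)/(q-1)$, and the Liebeck--Saxl analysis requires that the largest prime factor dominate the rest. This is exactly why the paper's Lemma~\ref{ntlemma} demands $(p_1\cdots p_{\kappa-1})^4<p_\kappa$ and that no product of distinct elements of $S$ have the form $(q^d-1)/(q-1)$; the existence of such an $S$ is itself a nontrivial counting argument (Dirichlet plus the Prime Number Theorem), not merely sparseness of bad primes. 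Finally, your target statement --- every transitive group of degree $n$ contains a regular subgroup --- is stronger than what the theorem needs and stronger than what the paper proves directly (the paper shows every transitive group of these degrees contains a transitive \emph{solvable} subgroup and lets the $2$-closure hypothesis and the cited results of Dobson and Maru\v si\v c do the rest); committing to the stronger claim makes your unresolved cases harder, not easier.
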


The proof of Theorem~\ref{main1} follows from Theorem~\ref{maintool}, which we believe to be of independent interest. (A transitive permutation group $G$ is {\bf quasiprimitive} if every non-identity normal subgroup of $G$ is transitive.)

\begin{thrm}\label{maintool}
There exists an infinite set of primes $S$ such that, for every finite product $n$ of distinct elements from $S$ with $n$ not prime, the only quasiprimitive groups of degree $n$ are the alternating and the symmetric groups.
\end{thrm}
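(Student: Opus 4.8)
The plan is to invoke the quasiprimitive version of the O'Nan--Scott theorem, which sorts the quasiprimitive groups of degree $n$ into eight types: affine, almost simple, simple diagonal, compound diagonal, holomorph simple, holomorph compound, twisted wreath, and product action. My first step is to observe that, as soon as $n$ is squarefree and not prime, every type except the almost simple one is already impossible, and this step uses nothing about the special set $S$. A group of affine type has degree a prime power $p^a$, which is squarefree only when $a=1$; since $n$ is not prime, this is excluded. In each of the remaining six non--almost-simple types the degree is divisible by a perfect square larger than $1$: either it is a proper power (as for product action and for the higher diagonal and holomorph cases), or it is a positive power of the order of a nonabelian simple group $T$, and $|T|$ is never squarefree (for every such $T$ its order is divisible by the square of a prime). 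Either way these types are incompatible with $n$ squarefree. This reduces the theorem to arranging $S$ so that no almost simple group other than $A_n$ or $S_n$ is quasiprimitive of degree $n$.

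Second, I would set up the almost simple analysis. Let $G$ be almost simple of degree $n$ with socle $T=\soc(G)$, a nonabelian simple group; since $G$ is quasiprimitive, $T$ is transitive, so $n=|T:H|$ for a subgroup $H=T\cap G_\alpha$, and every prime dividing $n$ divides $|T|$. The aim is to force $T=A_n$ acting naturally, which returns exactly $G\in\{A_n,S_n\}$. The set $S$ is then built greedily: having chosen primes $p_1<\dots<p_i$, one picks $p_{i+1}$ enormously larger than the product $p_1\cdots p_i$ (and larger than certain thresholds specified below). The effect is that in every product $n$ of distinct elements of $S$ the largest prime factor $p$ dominates all the others, $p>n/p$, so that $p>\sqrt n$ while $p$ exactly divides the squarefree number $n$.

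The heart of the argument, and the main obstacle, is the almost simple case via the classification of finite simple groups. For the sporadic groups there are only finitely many groups and hence finitely many squarefree subgroup indices, each with finitely many prime factors, so one simply refuses to admit into $S$ any prime dividing such an index. The genuine difficulty is the infinite families. When $T=A_m$ with $m\neq n$, or $T=A_n$ in a non-natural action, I would show that a squarefree index with a prime factor exceeding its own square root forces the natural action: degrees such as $\binom{m}{k}$ and multinomial coefficients are too divisible to be squarefree with a dominant prime once $k\ge 2$, whereas $k=1$ is the natural action and yields $m=n$. When $T$ is of Lie type over $\mathbb{F}_q$ (classical or exceptional, both infinite families), the order $|T|$ equals $q^N$ times a product of cyclotomic factors $q^{d_i}-1$; squarefreeness of the index together with the dominant prime $p$ compels $p$ to be a primitive prime divisor of some $q^e-1$, and controlling the $q$-part and the remaining factors $q^{d_i}-1$ shows that such an $n$ cannot be squarefree except in degenerate configurations that are excluded during the construction of $S$.

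The crux is to carry out this Lie-type analysis uniformly across all families and to verify that $S$ remains infinite. For the latter, the key point is quantitative: at each stage only finitely many primes are forbidden below any given bound, because bounds on the indices of subgroups and on the number and multiplicity of prime divisors of $|T|$ (furnished by the classification and by results on large subgroups and on squarefree and prime-power degrees) guarantee that a coincidence $n=|T:H|$ with $T\neq A_n$ imposes an arithmetic relation among the chosen primes that has no new solutions once $p_{i+1}$ is taken large enough. Hence the growth condition $p_{i+1}\gg p_1\cdots p_i$ can always be met while avoiding the forbidden primes, so infinitely many admissible primes remain and $S$ is genuinely infinite.
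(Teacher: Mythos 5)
Your reduction to the almost simple case via the quasiprimitive O'Nan--Scott theorem and square-freeness is sound and matches the paper's first step (the paper cites Li--Seress for it). But the heart of your argument --- the claim that for $T$ alternating or of Lie type, a square-free index $n=|T:H|$ with a dominant prime factor forces $T=A_n$ in its natural action --- is exactly the statement that needs proof, and your sketch does not supply it. Worse, the evidence you offer only addresses \emph{maximal} subgroups $H$ (binomial and multinomial coefficients for $A_m$, parabolic-type indices for groups of Lie type), i.e.\ primitive actions. The theorem concerns quasiprimitive groups, which may well be imprimitive, so the point stabiliser need not be maximal in $T$ and you must control the indices of \emph{arbitrary} subgroups of the simple socle; nothing in your outline does this, and there is no off-the-shelf classification of transitive (as opposed to primitive) actions of almost simple groups of square-free degree to lean on. There is a related gap in your construction of $S$: the greedy scheme forbids, at each stage, every prime dividing a ``bad'' index, and the assertion that only finitely many primes are forbidden below any given bound is precisely the uniform Lie-type estimate you have not established. (The paper instead gets infinitude of $S$ by a clean counting argument: Dirichlet plus the Prime Number Theorem against the sparsity of the numbers $(q^d-1)/(q-1)$.)

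The paper closes the non-maximal-stabiliser gap with one device you are missing: pass to a block system $\mathcal{B}$ with blocks of maximal size. The induced group $H$ on $\mathcal{B}$ is primitive, and since $G$ is quasiprimitive (and, by square-freeness, almost simple) the kernel of the action on $\mathcal{B}$ is trivial, so $H\cong G$. Now the Liebeck--Saxl classification of primitive groups of degree $mp$ with $m<p$ --- a finished, citable result --- applies to $H$ of degree $r=|\mathcal{B}|$: together with the arithmetic conditions built into $S$ (no product of its elements has the form $(q^d-1)/(q-1)$, and the largest prime factor $p$ of $n$ satisfies $p>(n/p)^4$) this forces $H\in\{A_r,S_r\}$ in the natural action, after disposing of the prime-degree quotient case via Gorenstein's list. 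The final contradiction is elementary: the block stabiliser $G_{\{B\}}\in\{A_{r-1},S_{r-1}\}$ contains the point stabiliser with index $n/r\geq r-1$, while $r>(n/r)^4\geq(r-1)^4$. To salvage your direct approach you would essentially have to reprove this reduction; as written, the proposal replaces a one-page argument by an open-ended CFSG case analysis whose decisive cases are left unverified.
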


Theorem~\ref{maintool} combined with~\cite[Theorem~4.7]{DobsonM2011} gives the following interesting corollary.

\begin{cor}\label{cor}
There exists an infinite set of primes $S$ such that, for every finite product $n$ of distinct  elements from $S$, every transitive group of degree $n$ contains a transitive solvable subgroup.
\end{cor}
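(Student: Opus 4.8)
The plan is to take for $S$ the set of primes furnished by Theorem~\ref{maintool}, discarding the prime $2$ if it happens to lie in it. This still leaves an infinite set of primes, now with the feature that every finite product $n$ of distinct elements is odd, and the conclusion of Theorem~\ref{maintool} is inherited by this smaller set since its products of distinct elements form a subset of the original ones. Fix such an $n$ and a transitive group $G$ of degree $n$; the goal is to exhibit inside $G$ a subgroup that is both transitive and solvable.

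First I would dispose of the case in which $n$ is prime, which is precisely the case left untouched by Theorem~\ref{maintool}. Here $n$ divides $|G|$, so by Cauchy's theorem $G$ contains an element $g$ of order $n$; acting on $n$ points $g$ can only consist of fixed points and $n$-cycles, and being nontrivial it must be a single $n$-cycle. The cyclic group $\langle g\rangle$ is then transitive and solvable, as required.

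For composite $n$ I would invoke \cite[Theorem~4.7]{DobsonM2011}, which reduces the existence of a transitive solvable subgroup of $G$ to the same property for the quasiprimitive groups that occur as components of $G$, that is, as the quasiprimitive quotient actions on a suitable chain of $G$-invariant partitions. The crucial observation is that, since $n$ is squarefree, every such component acts on a set whose size $d$ divides $n$, and hence $d$ is itself a product of distinct elements of $S$. When $d$ is composite, Theorem~\ref{maintool} forces the component to be the alternating or the symmetric group of degree $d$; when $d$ is prime, the component is merely some transitive group of prime degree. In every case the component contains a full $d$-cycle: this is immediate for the symmetric group and, by the prime-degree argument above, for prime $d$, while for the alternating group it holds precisely because $d$ is odd, a $d$-cycle being an even permutation exactly when $d$ is odd. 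Thus each component contains a transitive cyclic, hence solvable, subgroup, and feeding this into \cite[Theorem~4.7]{DobsonM2011} produces the desired transitive solvable subgroup of $G$.

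The main obstacle is aligning the hypotheses: one must verify that the quasiprimitive groups to which \cite[Theorem~4.7]{DobsonM2011} refers really are actions of degree dividing $n$, so that Theorem~\ref{maintool} applies to each of them, and that the transitive solvable subgroup demanded of every component genuinely exists. The parity bookkeeping is exactly what makes the preliminary removal of $2$ from $S$ indispensable; without it an even-degree alternating component could fail to contain a transitive cyclic subgroup, and one would then have to supply a different solvable witness for that component.
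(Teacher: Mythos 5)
Your overall skeleton matches the paper's: dispose of prime $n$ with a transitive cyclic subgroup obtained from Cauchy's theorem, and for composite $n$ feed Theorem~\ref{maintool} into \cite[Theorem~4.7]{DobsonM2011}. The prime case is fine. But there is a genuine gap in the composite case: you have guessed the content of \cite[Theorem~4.7]{DobsonM2011} to be a pure reduction of the form ``if every quasiprimitive component of $G$ contains a transitive solvable subgroup, then so does $G$,'' and you then spend your effort manufacturing transitive cyclic subgroups inside the components (hence the removal of $2$ and the parity discussion about $d$-cycles in $A_d$). That is not what the cited theorem does, and it is not the missing ingredient. The result from \cite{DobsonM2011} requires the arithmetic hypothesis $\gcd(n,\phi(n))=1$; the difficulty it overcomes is not finding solvable subgroups in the individual quotient actions along a chain of block systems, but \emph{gluing} them into a single transitive solvable subgroup of $G$, and it is precisely this Burnside-type condition on $n$ that makes the lifting work. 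Your proposal never verifies, and indeed cannot verify, this condition: the statement of Theorem~\ref{maintool} asserts only the quasiprimitive classification for \emph{some} infinite set $S$, and deleting $2$ from such a set does not give you any control over $\gcd(p_i,p_j-1)$. The paper instead returns to the explicit set constructed in Lemma~\ref{ntlemma}, whose condition \textbf{(ii)} ($\gcd(p_i,p_j-1)=1$ for all $i,j$) is exactly equivalent to $\gcd(n,\phi(n))=1$ for every product $n$ of distinct elements of $S$, and only then cites \cite[Theorem~4.7]{DobsonM2011}.

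Two smaller remarks. First, the removal of $2$ is vacuous for the paper's $S$ (its elements are all $\geq 11$, and $2$ could never belong to a set satisfying condition \textbf{(ii)} since $2\mid p-1$ for every odd prime $p$), and in any case oddness of $n$ is a much weaker property than $\gcd(n,\phi(n))=1$, which is what is actually consumed. Second, your worry about whether the components ``really are actions of degree dividing $n$'' is the right instinct, but the honest answer is that the correct fix is not to patch your guessed reduction: it is to use the specific $S$ of Lemma~\ref{ntlemma} and quote \cite[Theorem~4.7]{DobsonM2011} with its actual hypotheses, as the paper does.
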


These results can be used to obtain a new contribution to elusive groups, see Corollary~\ref{cor2}.

\subsection{Structure of the paper}In Section~\ref{sec:numbertheory} we collect our main tools, namely, a number theoretic result and  a few group theoretic lemmas. We then prove Theorems~\ref{main1} and~\ref{maintool} and Corollary~\ref{cor} in Section~\ref{sec:thrmsproof}. We conclude the paper with some comments and problems in Section~\ref{sec:comments}.

\section{Basics}\label{sec:numbertheory}
\subsection{Number theory}

We start by proving a number theoretic lemma, which depends on Dirichlet's Theorem on primes in arithmetic progressions and on the Prime Number Theorem. (We denote by $\phi(m)$ the Euler's totien function.)

\begin{lemma}\label{ntlemma}
There exists an infinite set $S$ of prime numbers such that, for each $\kappa\in\mathbb{N}$ and for each $p_1,\ldots,p_\kappa\in S$ with $p_1<p_2<\cdots<p_\kappa$, we have
\begin{description}
\item[(i)]$(p_1p_2\cdots p_{\kappa-1})^4<p_\kappa$,
\item[(ii)]$\gcd(p_i,p_{j}-1)=1$ for every $i,j\in \{1,\ldots,\kappa\}$,
\item[(iii)]the product $p_1p_2\cdots p_\kappa$ is not equal to $\frac{q^d-1}{q-1}$, for any prime power $q$  and any $d\geq 1$.
\end{description}
\end{lemma}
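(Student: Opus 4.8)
The plan is to construct $S=\{s_1<s_2<\cdots\}$ greedily, adding one prime at a time, after first observing that the three conditions, although quantified over all finite increasing tuples from $S$, reduce to requirements imposed on each new prime relative to the earlier ones. For (i), if $p_1<\cdots<p_\kappa$ is any tuple with $p_\kappa=s_m$, then $p_1,\dots,p_{\kappa-1}\in\{s_1,\dots,s_{m-1}\}$, so $(p_1\cdots p_{\kappa-1})^4\le(s_1\cdots s_{m-1})^4$; hence it suffices to guarantee $(s_1\cdots s_{m-1})^4<s_m$ at the $m$-th step. For (ii), $\gcd(p_i,p_j-1)=1$ is automatic when $i=j$ and when $i>j$ (there $p_i>p_j-1$ is prime), so only the pairs with the new prime in the larger slot matter, and it is enough to force $s_m\not\equiv 1\pmod{s_i}$ for all $i<m$. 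Condition (iii) is the genuinely global one: I must ensure $\prod_{t\in T}t\neq\frac{q^d-1}{q-1}$ for every subset $T$ containing the newly added prime, i.e.\ for all $2^{m-1}$ products $M'=\prod_{t\in T'}t$ with $T'\subseteq\{s_1,\dots,s_{m-1}\}$.

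The device for (ii) and for the hard part of (iii) is to impose two congruences on every element of $S$. First I require $s_m\equiv -1\pmod{M}$, where $M=s_1\cdots s_{m-1}$; this gives $s_m\equiv -1\not\equiv 1\pmod{s_i}$ for each $i<m$ (all primes involved being odd), settling (ii). Second — the key idea that avoids any sieve — I require every element of $S$ to be $\equiv 1\pmod 3$, so that every subset product satisfies $N:=\prod_{t\in T}t\equiv 1\pmod 3$. Since $\gcd(3,M)=1$, the Chinese Remainder Theorem combines the two congruences into a single residue class modulo $3M$ coprime to $3M$, and Dirichlet's Theorem supplies infinitely many primes in it.

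The payoff of $N\equiv 1\pmod 3$ is that it removes the only abundant case of (iii). The values $\frac{q^d-1}{q-1}\le Y$ split as follows: for $d=2$ one has $q+1\le Y$, a set of size $\sim Y/\log Y$ (essentially the shifted primes), while for $d\ge 3$ one has $q^{d-1}\le\frac{q^d-1}{q-1}\le Y$, so $q\le Y^{1/2}$ and the number of such values is only $O(\sqrt{Y}\log Y)$. But if $N=q+1$ with $N\equiv 1\pmod 3$, then $3\mid q$, forcing $q=3^e$; thus the dense case $d=2$ survives only as the sparse subfamily $N=3^e+1$. Hence the bad values that can actually be hit, namely $\{3^e+1\}\cup\{\frac{q^d-1}{q-1}:d\ge 3\}$, number only $O(\sqrt{Y}\log Y)$ up to $Y$. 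At the $m$-th step I take $Y=MX$ and count: summing over the $2^{m-1}$ products $M'$ and over these sparse bad values, each of which pins down at most one prime $p=N/M'\le X$, the number of primes $p\le X$ in the progression that violate (iii) is $O\!\left(2^{m-1}\sqrt{MX}\,\log(MX)\right)=o(X/\log X)$. By the Prime Number Theorem in arithmetic progressions the progression contains $\sim\frac{X}{\phi(3M)\log X}$ primes up to $X$, which dominates; imposing also $p>M^4$ (a lower bound discarding only finitely many candidates) secures (i). So a valid $s_m$ exists and the induction runs, the base case being handled by the same count.

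The main obstacle is conceptual rather than computational: the family $\{q+1:q\text{ prime}\}$ arising from $d=2$ has density too large to beat by a naive count against the primes in the progression, and a direct sieve for ``$M'p-1$ is a prime power'' uniformly over all subset products $M'$ would be awkward. Recognizing that the single global congruence $\equiv 1\pmod 3$ collapses this whole family into the sparse set $3^e+1$ is what lets the argument go through with only Dirichlet's Theorem and the Prime Number Theorem, as advertised. What remains is routine: checking the reductions of (i) and (ii) to per-step conditions, and confirming the elementary bound $O(\sqrt{Y}\log Y)$ for the sparse bad values.
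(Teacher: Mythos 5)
Your proposal is correct and follows essentially the same route as the paper: a greedy induction in which the new prime is drawn from the progression $-1\pmod{M}$ (Dirichlet plus the Prime Number Theorem), while the candidates violating (iii) are shown to be too sparse by the elementary count of values $\frac{q^d-1}{q-1}$, the dense $d=2$ family being collapsed by a congruence restriction on the subset products. The only difference is cosmetic: the paper gets that last step for free because all subset products are odd (so $q+1$ odd forces $q=2^t$), whereas you impose the extra condition $\equiv 1\pmod 3$ to force $q=3^e$; your version works, but the additional congruence is unnecessary.
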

\begin{proof}
We construct the set $S$ inductively: we show that
 there exists an infinite family of sets of prime numbers $\{S_\ell\}_{\ell\in\mathbb{N}\setminus\{0\}}$  such that, for every $\ell\in\mathbb{N}\setminus\{0\}$:
\begin{enumerate}
\item[(a)] $|S_\ell|=\ell$,
\item[(b)] $S_\ell\subseteq S_{\ell+1}$,
\item[(c)] for each $\kappa\in \{1,\ldots,\ell\}$ and for each $p_1,\ldots,p_\kappa\in S_\ell$ with $p_1<p_2<\cdots<p_\kappa$, the primes $p_1,\ldots,p_\kappa$ satisfy the conditions {\bf (i),~(ii)} and~{\bf (iii)}.
\end{enumerate}Then the proof follows immediately by taking $S:=\bigcup_{\ell\in \mathbb{N}\setminus\{0\}}S_\ell$.

Define $S_1:=\{11\}$ and observe that $11$ is not of the form $(q^d-1)/(q-1)$, for any prime power $q$ and any $d\geq 1$. Now, suppose that $\ell\in \mathbb{N}\setminus\{0\}$, and that $S_\ell=\{p_1,\ldots,p_\ell\}$ has been defined. We construct $S_{\ell+1}$ by carefully choosing a suitable prime $p_{\ell+1}$ and by setting $S_{\ell+1}:=S_\ell\cup\{p_{\ell+1}\}$.

Set $m:=p_1\cdots p_\ell.$
For $N\in \mathbb{N}\setminus\{0\}$, define
\begin{align*}
&\mathcal{T}_N:=\{x\in\mathbb{N}\mid x>m^4,  x \textrm{ prime, }x\leq N \textrm{ and }x\equiv -1\pmod {m} \},\\
&\mathcal{S}_N:=\left\{x=\frac{q^d-1}{q-1}\mid \textrm{ for some prime power }q\textrm{ and some } d\geq 1, x \textrm{ odd,  }x\leq mN \right\}.
\end{align*}

We claim that
\begin{equation}\label{eq:3}
|\mathcal{T}_N|\sim \frac{N}{\phi(m)\log(N)}.
\end{equation}
(Here, given a set $X$ we denote by $|X|$ its cardinality, moreover, by abuse of notation, we let $|\mathcal{T}_N|$ denote the function in the variable $N$ given by $N\mapsto |\mathcal{T}_N|$. Finally, $\sim$ denotes Landau's notation for two asymptotic functions in the variable $N$.) Write
$$
\mathcal{T}'_N:=
\{
x\in\mathbb{N}\mid x \textrm{ prime, }x\leq N \textrm{ and }x\equiv -1\pmod m\}.
$$
As there are at most $m^4$ primes less than or equal to $m^4$, we get
$|\mathcal{T}_N|\geq |\mathcal{T}'_{N}|-m^4.$
From~\cite[Part~II, \S 4, Theorem~$2$]{Serre} and the Prime Number Theorem, we have
$$\lim_{N\to\infty}\frac{|\mathcal{T}'_N|}{N/(\phi(m)\log (N))}=1$$
and Eq.~\eqref{eq:3} is proven.

We claim that
\begin{equation}\label{eq:1}|\mathcal{S}_N|\leq (1+\log_2(mN))(mN)^{1/2}+\log_2(mN)+1.
\end{equation}
   Let $x\in \mathcal{S}_N$ with $x=(q^{d}-1)/(q-1)$ for some prime power $q$ and some $d\geq 1$. Now, $q^{d-1}\leq (q^d-1)/(q-1)=x\leq mN$ and hence $d-1=\log_q(q^{d-1})\leq \log_q(mN)\leq \log_2(mN)$. It follows that $d\leq 1+\log_2(mN)$, that is, we have at most $1+\log_2(mN)$ choices for $d$.

When $d\geq 3$, from $q^{d-1} \leq mN$, we get $q\leq (mN)^{1/2}$ and hence we have at most $(mN)^{1/2}$ choices for $q$. This shows that we have at most $(1+\log_2(mN))(mN)^{1/2}$ choices for $x$ when $d\geq 3$: this accounts for the first summand in the right-hand-side of Eq.~\eqref{eq:1}. Suppose that $d=2$. Then $x=q+1$ is odd and hence $q=2^t$, for some $t\geq 1$. Now, $2^t<x\leq mN$ and hence $t\leq \log_2(mN)$. Thus we have at most $\log_2(mN)$ choices for $x$ in this case. Clearly, we have only one element when $d=1$.

Using Landau's notation, Eq.~\eqref{eq:1} yields
\begin{equation}\label{eq:2}
|\mathcal{S}_N|\in o\left(N/(\phi(m)\log (N)\right)).
\end{equation}

Let $N_\ell$ be the smallest positive integer with
$0<|\mathcal{S}_{N_\ell}|<|\mathcal{T}_{N_\ell}|$ and observe that $N_\ell$ is well-defined from Eqs.~\eqref{eq:2} and~\eqref{eq:3}.

Observe that  every element $x\in\mathcal{T}_{N_\ell}$ is prime and, together with $p_1,\ldots,p_\ell$, satisfies {\bf (i)} and {\bf (ii)}. In fact, if $x\in \mathcal{T}_{N_\ell}$, then $x>m^4$ and $x\equiv -1\pmod m$. Hence, for every $i\in \{1,\ldots,\ell\}$, $x\equiv -1\pmod {p_i}$ and so $\gcd(x,p_i-1)=1$. Suppose that, for every $y\in \mathcal{T}_{N_\ell}$, the set $\{p_1,\ldots,p_\ell,y\}$ fails to satisfy {\bf (iii)}. Then, for every $y\in\mathcal{T}_{N_\ell}$, there exist $i_1,\ldots,i_t\in \{1,\ldots,\ell\}$ with
$$p_{i_1}\cdots p_{i_t}\cdot y=\frac{q^d-1}{q-1},$$ for some prime power $q$ and some $d\geq 1$. Now,
$$p_{i_1}\cdots p_{i_t}\cdot y\leq p_1\cdots p_\ell \cdot y=my\leq mN_\ell$$
and $p_{i_1}\cdots p_{i_t}\cdot y\in \mathcal{S}_{N_\ell}$. Therefore $|\mathcal{S}_{N_\ell}|\geq |\mathcal{T}_{N_\ell}|$, which is a contradiction. Thus there exists $p_{\ell+1}\in \mathcal{T}_{N_\ell}$ such that $\{p_1,\ldots,p_\ell,p_{\ell+1}\}$ satisfies {\bf (i)}, {\bf (ii)} and {\bf (iii)}.
\end{proof}

\subsection{Group theory}\label{sec:gt}

Let $G$ be a transitive permutation group on a set $X$, and let $B\subseteq X$ with $B\neq\emptyset$.  We say $B$ is a {\bf block for $G$} if $B^g=B$ or $B^g\cap B = \emptyset$ for every $g\in G$.  The whole set $X$ and the singleton sets $\{x\}$ are always blocks for $G$ and so are called {\bf trivial blocks}.  If $G$ has a non-trivial block, then $G$ is {\bf imprimitive}, and {\bf primitive} otherwise.  If $B$ is a block for $G$, then $\{B^g\mid g\in G\}$ is a {$G$-invariant partition} of $X$ or system of imprimitivity for $G$.

In the following lemma we use a result of Liebeck and Saxl~\cite[Corollary~$1$]{LiebeckS1985a}: this was one of the first cases where the classification of the finite simple groups had made possible the solution of an outstanding classical problem in the theory of permutation groups. Namely, the classification of the primitive groups of degree $mp$ with $p$ prime and $m<p$. (Here $\soc(G)$ denotes the socle of the group $G$.)

\begin{lemma}\label{uniprimitiveorder}
Let $G$ be a primitive permutation group of degree $n$ with $n$ not prime and let  $p$ be the largest prime divisor of $n$. Then one of the following holds:
\begin{description}
\item[(i)] $p < (n/p)^4$;
\item[(ii)] $\soc(G)=\PSL(d,q)$, $n=\frac{q^d-1}{q-1}$ and $G$ is endowed with one of its natural $2$-transitive actions;
\item[(iii)] $\soc(G)=A_n$ and $G$ is endowed  with its natural $2$-transitive action.
\end{description}
\end{lemma}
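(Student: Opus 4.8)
The plan is to set $m=n/p$ and to separate the two cases $m\geq p$ and $m<p$. If $m\geq p$, then $m\geq 2$ because $n$ is not prime, so $m^4>m\geq p$ and conclusion \textbf{(i)} holds at once; thus we may assume $m<p$. This is exactly the hypothesis under which the Liebeck--Saxl classification~\cite[Corollary~$1$]{LiebeckS1985a} of the primitive permutation groups of degree $mp$ (with $p$ prime and $m<p$) applies, and the whole proof reduces to inspecting the resulting list and checking, entry by entry, that one of \textbf{(i)}, \textbf{(ii)} or \textbf{(iii)} holds.

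I would first observe that, under $m<p$, the non-almost-simple O'Nan--Scott types cannot arise: in the affine case $n=p^a$ with $a\geq 2$ (since $n$ is not prime), so $m=p^{a-1}\geq p$, while the diagonal, product and twisted wreath types all have $n/p\geq p$, each contradicting $m<p$. Hence $G$ is almost simple, and I would argue according to its socle $T$. The two estimates that carry the argument are an \emph{upper} bound on $p$, the largest prime dividing the degree, in terms of the natural parameter of $T$, and a matching \emph{lower} bound on $m=n/p$. For $T=A_c$ acting on $k$-subsets one has $p\leq c$; the natural action $k=1$ gives $\soc(G)=A_n$, that is \textbf{(iii)}, whereas for $k\geq 2$ the degree $\binom{c}{k}$ makes $m$ at least of order $c$, so $p<m^4$ and we land in \textbf{(i)}. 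For $T=\PSL(d,q)$ the actions on projective points and on hyperplanes give $n=(q^d-1)/(q-1)$, which is \textbf{(ii)}, while every other subspace action inflates $m$ enough to force $p<m^4$ again.

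The remaining work, which I expect to be the main obstacle, is the uniform verification that $p<m^4$ for all the other almost simple groups in the list---the remaining classical groups, the exceptional groups of Lie type, and the sporadic groups. For each such family one bounds the largest prime divisor of the degree from above and the cofactor $m=n/p$ from below; the exponent $4$ is deliberately generous, chosen so as to absorb all of these cases simultaneously, so the difficulty is systematic bookkeeping through the classification rather than any single delicate estimate. Once every family other than $\PSL(d,q)$ on points or hyperplanes and $A_n$ in its natural action has been shown to satisfy $p<m^4$, the alternatives \textbf{(i)}, \textbf{(ii)} and \textbf{(iii)} exhaust all possibilities and the lemma follows.
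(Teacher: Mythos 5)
Your proposal is correct and follows essentially the same route as the paper: reduce to the case $n/p<p$ (the paper phrases this as assuming $p\geq (n/p)^4$, which forces $p>n^{1/2}>n/p$), invoke the Liebeck--Saxl classification of primitive groups of degree $mp$ with $m<p$, and then verify case by case from their Table~3 that every entry other than $A_n$ in its natural action and $\PSL(d,q)$ on points or hyperplanes satisfies $p<(n/p)^4$. The paper likewise leaves the bulk of that verification as a routine inspection, working out only one representative row, so your deferral of the bookkeeping matches the level of detail of the published argument.
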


\begin{proof}
We suppose that $p\geq (n/p)^4$. Observe that $p\geq n^{4/5}>n^{1/2}$ because $n>1$. In particular, $n=mp$ with $1 < m < p$ and hence the work of Liebeck and Saxl~\cite{LiebeckS1985a} applies.

Let $T$ be the socle of $G$. Now, $T$, $n$ and some information on $p$ are given in the first, second and fourth columns of~\cite[Table~$3$]{LiebeckS1985a}. We  assume that Cases~{\bf (ii)} and~{\bf (iii)} do not occur. With a careful case-by-case examination of each of the remaining lines of~\cite[Table~$3$]{LiebeckS1985a}  we obtain that no primitive group may arise: all computations are straightforward and here, to give an idea of the proof, we deal only with  the fifth row of~\cite[Table~$3$]{LiebeckS1985a} (all other rows are similar).

We have $S=\PSL(d,q)$, $d\geq 4$, $n=(q^d-1)(q^{d-1}-1)/((q^2-1)(q-1))$ and $p=(q^{d-1}-1)/(q-1)$ or $p$ divides $(q^d-1)/(q-1)$. If $p=(q^{d-1}-1)/(q-1)$, we have $p<(n/p)^4$, and hence $p$ must  divide $(q^d-1)/(q-1)$. In particular,
$$\left(\frac{q^{d-1}-1}{q^2-1}\right)^4\leq \left(\frac{n}{p}\right)^4\leq p\leq \frac{q^d-1}{q-1}.$$
However, this inequality is never satisfied when $d\geq 4$.
\end{proof}

In Lemma~\ref{uniprimitiveorder}, the reader should not take too seriously the exponent $4$ in $p<(n/p)^4$. With more effort, one can prove a similar lemma with a smaller exponent; however, for the scope of this paper we content ourselves with our weaker statement.

\begin{lemma}\label{primitiveAn}
Let $S$ be as in Lemma~$\ref{ntlemma}$ and let $n$ be a finite product of distinct elements from $S$ with $n$ not prime.  Then the only primitive groups of degree $n$ are $A_n$ and $S_n$.
\end{lemma}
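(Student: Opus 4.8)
The plan is to combine Lemma~\ref{uniprimitiveorder} with the number-theoretic properties of $S$ guaranteed by Lemma~\ref{ntlemma}. Let $G$ be a primitive group of degree $n$, where $n=p_1p_2\cdots p_\kappa$ is a product of distinct elements of $S$ with $p_1<\cdots<p_\kappa$ and $n$ not prime (so $\kappa\geq 2$). The largest prime divisor of $n$ is $p:=p_\kappa$, and consequently $n/p=p_1\cdots p_{\kappa-1}$. I would apply Lemma~\ref{uniprimitiveorder} to $G$, which leaves exactly the three cases (i), (ii), (iii), and show that the first two are incompatible with the defining conditions on $S$, forcing case (iii), i.e.\ $\soc(G)=A_n$ and $G\in\{A_n,S_n\}$.

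First I would rule out case (i). Condition (i) of Lemma~\ref{uniprimitiveorder} asserts $p<(n/p)^4$, that is, $p_\kappa<(p_1\cdots p_{\kappa-1})^4$. But this directly contradicts property~{\bf (i)} of Lemma~\ref{ntlemma}, which guarantees $(p_1\cdots p_{\kappa-1})^4<p_\kappa$. Hence case (i) cannot occur. Next I would rule out case (ii). There, $n=(q^d-1)/(q-1)$ for some prime power $q$ and some $d\geq 1$. Since $n=p_1\cdots p_\kappa$ is a product of distinct elements of $S$, this is exactly the situation excluded by property~{\bf (iii)} of Lemma~\ref{ntlemma}, so case (ii) is impossible as well.

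With cases (i) and (ii) eliminated, Lemma~\ref{uniprimitiveorder} forces case (iii): $\soc(G)=A_n$ and $G$ acts with its natural $2$-transitive action of degree $n$. A primitive (indeed $2$-transitive) group of degree $n$ whose socle is the alternating group $A_n$ in its natural action on $n$ points must satisfy $A_n\leq G\leq S_n$ in that action, whence $G=A_n$ or $G=S_n$. This completes the argument.

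I do not expect any genuine obstacle here, since the lemma is essentially a clean bookkeeping step that matches the three alternatives of Lemma~\ref{uniprimitiveorder} against the two arithmetic conditions built into $S$. The only point requiring a little care is confirming that $p_\kappa$ is indeed the largest prime divisor and that $n/p$ equals the product of the remaining primes, so that property~{\bf (i)} of Lemma~\ref{ntlemma} and condition (i) of Lemma~\ref{uniprimitiveorder} are exact negations of one another; this is immediate since the $p_i$ are distinct primes and $n$ is their product. (Property~{\bf (ii)} of Lemma~\ref{ntlemma} is not needed for this particular lemma but will be used later in the paper.)
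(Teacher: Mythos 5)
Your proposal is correct and follows exactly the same route as the paper: apply Lemma~\ref{uniprimitiveorder}, use property~\textbf{(i)} of Lemma~\ref{ntlemma} to exclude case (i) and property~\textbf{(iii)} to exclude case (ii), leaving $\soc(G)=A_n$. The paper's own proof is just a terser version of your bookkeeping.
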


\begin{proof}
Let $G$ be a primitive group of degree $n$ and let $S$ be its socle. Recall that the largest prime divisor $p$ of $n$ satisfies $(n/p)^4 < p$ (from Lemma~\ref{ntlemma}~{\bf (i)}) and that $n$ is not of the form $(q^d-1)/(q-1)$, for any prime power $q$ and any $d\geq 1$ (from Lemma~\ref{ntlemma}~{\bf (iii)}).  Then $S=A_n$ by Lemma~\ref{uniprimitiveorder}.
\end{proof}




\section{Proof of Theorems~\ref{main1} and~\ref{maintool} and Corollary~\ref{cor}}\label{sec:thrmsproof}

Every primitive group is quasiprimitive, however there are imprimitive groups that are quasiprimitive: for example the automorphism group of the line graph of the Petersen graph.  For the proof of Theorem~\ref{maintool}, we are only interested in quasiprimitive groups $G$ that are almost simple.  In this case,  $G$ is quasiprimitive if the socle of $G$ is transitive.

\begin{proof}[Proof of Theorem~$\ref{maintool}$]
Let $S$ be as in Lemma~\ref{ntlemma}, let $n$ be a finite product of distinct elements from $S$ with $n$ not prime and let $G$ be a quasiprimitive group of degree $n$.  By Lemma~\ref{primitiveAn} the only primitive groups of degree $n$ are $A_n$ or $S_n$.  We argue by contradiction and we suppose that $G\ngeq A_n$. In particular, $G$ is imprimitive. Let $\mathcal{B}$ be a non-trivial system of imprimitivity for $G$ with blocks of maximal size and let $H$ be the permutation group induced by $G$ in its action on $\mathcal{B}$. Set $r:=|\mathcal{B}|$. Observe that $H$ is primitive by our choice of $\mathcal{B}$.

As $n$ is square-free, we see that $G$ is almost simple (see for example~\cite{LiS2005}) and hence $H\cong G$.

As $H$ is primitive of degree $r$ and $r$ is a finite product of distinct elements from $S$, we get that either $r$ is prime, or $H\in \{A_r,S_r\}$ by Lemma~\ref{primitiveAn}. Suppose that $H\notin\{A_r,S_r\}$. Since $r$ is not of the form $(q^d-1)/(q-1)$, for any prime power $q$ and any $d\geq 1$, from~\cite[Theorem 1.49]{Gorenstein1982} we obtain $r=11$ and $H=\PSL(2,11)$, or $r=23$ and $H=M_{23}$. In both cases, $r$ is the largest prime divisor of $|H|=|G|$ and hence $r>(n/r)^4$. A direct inspection on $|H|$ and $n$ shows that this is impossible. Thus $H\in \{A_r,S_r\}$ and the action of $H$ on $\mathcal{B}$ is the natural action of $A_r$ or $S_r$.

From Lemma~\ref{ntlemma}, there exists a prime divisor $p$ of $n$ with $p>(n/p)^4$. Let $B\in \mathcal{B}$, let $b\in B$, and denote by $G_{\{B\}}$ the setwise stabiliser of $B$ in $G$ and by $G_b$ the stabiliser of the point $b$. As $G\cong H\in \{A_r,S_r\}$ and as $n\mid |G|$, we obtain that $p\mid r!$ and hence $p\leq r$. If $p\nmid r$, then $p>(n/p)^4\geq r^4\geq p^4$, a contradiction. Thus $p\mid r$ and hence $r>(n/r)^4$.

From above, $|G:G_{\{B\}}|=r$, $|G_{\{B\}}:G_b|=n/r$ and $G_{\{B\}}\in \{A_{r-1},S_{r-1}\}$. In particular, we have $n/r=|G_{\{B\}}:G_b|\geq r-1$. Thus $r>(n/r)^4\geq(r-1)^4$, a contradiction.
\end{proof}

\begin{proof}[Proof of Corollary~$\ref{cor}$]
Let $S$ be as in Lemma~\ref{ntlemma} and let $n$ be a finite product of distinct elements from $S$. From Lemma~\ref{ntlemma}, we have $\gcd(n,\phi(n))=1$. Now, if $n$ is prime, then  every transitive group of degree $n$ contains a cyclic transitive subgroup. If $n$ is not prime, the proof follows immediately combining Theorem~\ref{maintool} and~\cite[Theorem~4.7]{DobsonM2011} and using $\gcd(n,\phi(n))=1$.
\end{proof}

\begin{proof}[Proof of Theorem~$\ref{main1}$]Let $S$ be as in Corollary~\ref{cor}. Now, for every finite product $n$ of distinct elements from $S$, we have $\gcd(n,\phi(n))=1$ and the only quasiprimitive groups of degree $n$ are $A_n$ and $S_n$. Now~\cite[Corollary~4.10]{DobsonM2011} immediately gives that every vertex-transitive digraph of order $n$ is a Cayley digraph.
\end{proof}

\section{Comments}\label{sec:comments}
Theorem~\ref{main1} finally proves that the arithmetic structure of some Cayley numbers is much more rich than the cases that have been investigated so far. It is now interesting, in our opinion, to estimate the density of Cayley numbers. It is fairly straightforward  using the group theoretic and the number theoretic results already available in the literature to prove that Cayley numbers have density zero in the natural numbers, that is,
$$\lim_{n\to \infty}\frac{|\{m\in\mathbb{N}\mid m\textrm{ Cayley number}, m\leq n\}|}{n}=0.$$
Therefore we pose the following problem.

\begin{prob}
Determine the rate of growth of the function $$n\mapsto|\{m\in\mathbb{N}\mid m\textrm{ Cayley number}, m\leq n\}|.$$
\end{prob}

To see the importance of Corollary~\ref{cor} we need a term introduced by Hassani, Iranmanesh and Praeger~\cite{HassaniIP1998} in a paper on Cayley numbers.

Let $G$ be transitive of degree $n$.  If ${\cal B}$ and ${\cal C}$ are systems of imprimitivity for $G$ and every block of ${\cal C}$ is a union of blocks of ${\cal B}$, we write ${\cal B}\preceq{\cal C}$ (and ${\cal B}\prec{\cal C}$ when ${\cal B}\neq {\cal C}$).  Let $n = p_1^{a_1}p_2^{a_2}\cdots p_r^{a_r}$ be the prime factorisation of $n$ and define $\Omega:{\mathbb N}\setminus\{0\}\mapsto {\mathbb N}$ by $\Omega(n) = \sum_{i}^ra_i$  (thus $\Omega(n)$ is the number of prime divisors of $n$).  We say $G$ is {\bf $\Omega(n)$-step imprimitive} if there exists a sequence of systems of imprimitivity ${\cal B}_0 \prec{\cal B}_1\prec \cdots \prec{\cal B}_{\Omega(n)}$ for $G$. Observe that, if $B_{i+1}\in{\cal B}_{i+1}$ and $B_i\in{\cal B}_i$, then $\vert B_{i+1}\vert/\vert B_i\vert$ is prime for every $i\in\{1,\ldots, m - 1\}$, and that ${\cal B}_0$ and ${\cal B}_{\Omega(n)}$ are the trivial systems of imprimitivity.  If, in addition, each ${\cal B}_i$ is formed by the orbits of a normal subgroup of $G$, we say that $G$ is {\bf normally $\Omega(n)$-step imprimitive}.

\begin{thrm}[\!\!{{\cite[Theorem 1.9]{Dobson2008}}}]\label{ted}
A transitive group  of square-free degree $n$ contains a normally $\Omega(n)$-step imprimitive subgroup if and only if it contains a transitive solvable subgroup.
\end{thrm}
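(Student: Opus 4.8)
The statement is an equivalence, so the plan is to treat the two implications separately. The reverse implication (a transitive solvable subgroup yields a normally $\Omega(n)$-step imprimitive one) is the soft direction, and I expect the forward implication to be where the real work lies.

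For the reverse direction I would show that a transitive solvable subgroup $R\leq G$ is \emph{already} normally $\Omega(n)$-step imprimitive. Take a chief series $1=R_0\triangleleft R_1\triangleleft\cdots\triangleleft R_s=R$; since each $R_j$ is normal in $R$, its orbits form an $R$-invariant partition $\mathcal{C}_j$, and these satisfy $\mathcal{C}_0\preceq\mathcal{C}_1\preceq\cdots\preceq\mathcal{C}_s$ with $\mathcal{C}_0$ the partition into singletons (as $R_0=1$) and $\mathcal{C}_s=X$ (as $R_s=R$ is transitive). The block size at level $j$ is $|R_j:R_j\cap R_x|$, which divides $n$ and is therefore square-free; since the ratio of two consecutive block sizes divides the prime-power order $|R_j:R_{j-1}|$ of a chief factor of the solvable group $R$, square-freeness forces each \emph{strict} refinement to multiply the block size by exactly one prime. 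As the product of all the ratios equals $|R:R_x|=n$, there are precisely $\Omega(n)$ strict refinements; deleting repetitions and reindexing yields a chain $\mathcal{B}_0\prec\cdots\prec\mathcal{B}_{\Omega(n)}$ in which every $\mathcal{B}_i$ is the orbit partition of a term of the chief series, hence of a normal subgroup of $R$. Thus $R$ witnesses the required property.

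For the forward direction I would argue by induction on $\Omega(n)$, the case $\Omega(n)=1$ (prime degree) being Cauchy's theorem, which supplies a transitive cyclic, hence solvable, subgroup. First I would replace the given normal subgroups realising $\mathcal{B}_0\prec\cdots\prec\mathcal{B}_{\Omega(n)}$ by an \emph{increasing} chain $1=M_0\leq M_1\leq\cdots\leq M_{\Omega(n)}$ of normal subgroups of $K$ with $\mathrm{Orb}(M_i)=\mathcal{B}_i$ (possible because $M_{i-1}M_i$ still realises $\mathcal{B}_i$). Peeling off the coarsest non-trivial system, $K$ acts on the $p$-element block set $\mathcal{B}_{\Omega(n)-1}$ as a transitive group of prime degree $p$, while the setwise stabiliser of a block $B$ induces on $B$ a transitive group that is again normally $(\Omega(n)-1)$-step imprimitive: the systems $\mathcal{B}_0,\ldots,\mathcal{B}_{\Omega(n)-1}$ restrict to $B$ as the orbit partitions of $M_0,\ldots,M_{\Omega(n)-1}$, all of which lie in $K_{\{B\}}$. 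By induction $B$ carries a transitive solvable subgroup, and the plan is to glue $p$ conjugate copies of it together with a $p$-element of $K$ that cyclically permutes the blocks (extracted from a Sylow $p$-subgroup of $K$, whose image on $\mathcal{B}_{\Omega(n)-1}$ is cyclic of order $p$), producing a subgroup of shape ``solvable $\wr C_p$'' that is transitive of degree $n$ and solvable.

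The hard part will be exactly this gluing step. The obstruction is that the kernel of the $K$-action on a block system can be nonsolvable, so one cannot simply pull back a solvable transitive subgroup of a quotient: a literal preimage reintroduces the nonsolvable kernel and destroys solvability. The full force of the hypothesis — that a \emph{realising normal subgroup} exists at every level, not merely a block system — should be used precisely here to select a solvable supplement instead of the whole preimage. Concretely, I expect to combine a Frattini argument (applied to a Sylow subgroup of the relevant normal subgroup $M_i$, which, being a $p$-group, is both solvable and transitive on each block of prime size) with a careful choice of the cyclic generator so that it \emph{normalises} the chosen block-actions rather than merely permuting them; verifying that the generator can be taken with this normalising property is the delicate point on which the whole argument turns.
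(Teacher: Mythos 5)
The paper itself offers no proof of this statement: it is imported verbatim from \cite[Theorem 1.9]{Dobson2008}, so there is no in-paper argument to measure yours against; it has to be judged on its own terms. On those terms, your reverse implication is correct and essentially complete: the orbit partitions of the terms of a chief series of a transitive solvable subgroup $R$ form an $R$-invariant chain, the block sizes divide $n$ and are therefore square-free, the ratio of consecutive block sizes divides the prime-power order of the corresponding chief factor, so each strict refinement multiplies the block size by exactly one prime, and counting gives precisely $\Omega(n)$ strict steps, each realised by a normal subgroup of $R$. That half is fine.

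The forward implication, however, is left with a hole at exactly the point where the theorem's content lies, and you say so yourself (``the delicate point on which the whole argument turns''). The induction hands you a transitive solvable subgroup $T_B$ of the \emph{induced} group $K_{\{B\}}^{B}$ on each block $B$ of the coarsest proper system, but these live in quotients: the kernel of $K_{\{B\}}$ on $B$ may be nonsolvable, so you cannot take preimages, and the proposed gluing $\langle T_B, g\rangle$ with $g$ a $p$-element cycling the blocks does not have shape ``solvable $\wr C_p$'' in general, because $g^{p}\in K_{\{B\}}$ need not normalise $T_B$ — the returning conjugate $T_B^{g^p}$ can differ from $T_B$, and nothing in your construction forces the groups $T_{B^{g^i}}$ to be chosen coherently around the cycle. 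Your suggested remedy, a Frattini argument on a Sylow subgroup of a realising normal subgroup, naturally operates at the \emph{bottom} of the chain, where the blocks of $\mathcal{B}_1$ have prime size $p_1$ (there a Sylow $p_1$-subgroup $P$ of $N_1$ has the same orbits as $N_1$, and $K=N_1N_K(P)$ makes $N_K(P)$ transitive); it does not apply to the top system you peel off, whose blocks have composite size $n/p$ and on which a Sylow subgroup of the realising normal subgroup is not transitive. So the inductive scheme itself would have to be reorganised before the gap could even be attacked. As written, the hard half of the equivalence is not proved.
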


Hassani, Iranmanesh and Praeger~\cite[Theorem 1.1]{HassaniIP1998} showed that a vertex-transitive graph of order $pqr$, where $p$, $q$, and $r$ are distinct primes satisfying some additional arithmetic conditions, is a Cayley graph provided its automorphism group contains a normally $3$-step imprimitive subgroup (and hence a transitive solvable subgroup in view of Theorem~\ref{ted}).  The first author proved a similar result: a vertex-transitive graph of order $n$, with $\gcd(n,\phi(n))=1$ or with $\gcd(n,\phi(n))=q$ where $q$ is a prime such that $q^2\nmid (p-1)$ for every prime $p\mid (n/q)$,  is isomorphic to a Cayley graph if and only if its automorphism group contains a transitive solvable subgroup (or equivalently, is normally $\Omega(n)$-step imprimitive); see~\cite{Dobson2000a,Dobson2008}.  It is also known that any group of square-free order is solvable.  Thus the set $T$ of square-free integers for which every $2$-closed transitive group of degree $n\in T$ contains a transitive solvable subgroup is related to the set of square-free Cayley numbers, and if additional arithmetic conditions are imposed on the integers $n\in T$, the property of any $2$-closed permutation group of degree $n$ containing a transitive solvable subgroup implies $n$ is a Cayley number.  Thus the following problem is a natural one.

\begin{prob}
Determine the square-free integers $n$ with the property that every $2$-closed transitive group of degree $n$ contains a transitive solvable subgroup.
\end{prob}

We observe that it is not the case that if $n$ is a Cayley number then every transitive group of degree $n$ contains a transitive solvable subgroup.  Indeed, $14$ is a Cayley number (see~\cite[Table 1]{McKayP1994}), but $\PSL(3,2)$ has a transitive permutation  representation of degree $14$ containing no transitive solvable subgroup.
Thus, we would like to pose another problem.

\begin{prob}
Determine the square-free integers $n$ with the property that every transitive group of degree $n$ contains a transitive solvable subgroup.
\end{prob}

There is one more problem to which our work  supplies a partial solution: the problem of finding elusive groups.  This problem originated with a conjecture of Maru\v si\v c: he conjectured that the automorphism group of every vertex-transitive graph contains a semiregular element~\cite[Problem 2.4]{Marusic1981}.  Klin generalised this conjecture to the conjecture that all $2$-closed groups contain a semiregular element~\cite[Problem BCC15.12]{Cameron1997}.  Since heuristically most transitive groups contain  semiregular elements,  a transitive group which does not contain a semiregular element is called {\bf elusive}. While it is already known that every $2$-closed group of square-free degree contains a semiregular element~\cite{DobsonMMN2007}, it is not known in general if every transitive group of square-free degree is elusive.   Much is known on this topic,  see~\cite{DobsonM2011} for a list of partial results as well as for undefined terminology.  Combining Theorem~\ref{maintool} and~\cite[Corollary 4.9]{DobsonM2011} we have the following result.

\begin{cor}\label{cor2}
Let $S$ be as in Lemma~$\ref{ntlemma}$ and  let $n$ be a finite product  of distinct elements from $S$.  Then no permutation group of degree $n$ is elusive.
\end{cor}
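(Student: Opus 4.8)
The plan is to prove the equivalent assertion that every transitive permutation group $G$ of degree $n$ possesses a semiregular element, since a transitive group is elusive precisely when it has none. I would organise the argument according to whether or not $n$ is prime, because Theorem~\ref{maintool} only controls quasiprimitive groups of \emph{composite} square-free degree.

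First I would treat the case in which $n$ is a single prime of $S$. Here the conclusion is elementary and entirely independent of Theorem~\ref{maintool}: a transitive group of prime degree $n$ has order divisible by $n$, so by Cauchy's theorem it contains an element $g$ of order $n$. Regarding $g$ as a permutation of $n$ points whose order is the prime $n$, each of its cycles has length $1$ or $n$; as $g\neq 1$ this forces $g$ to be a single $n$-cycle, which is semiregular.

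For the main case, $n$ composite, I would combine Theorem~\ref{maintool} with the reduction supplied by \cite[Corollary~4.9]{DobsonM2011}. By Theorem~\ref{maintool} the only quasiprimitive groups of degree $n$ are $A_n$ and $S_n$, and each of these contains a semiregular element: every element of $S$ is odd (indeed $\geq 11$), so the product $n$ is odd, whence an $n$-cycle is an even permutation and therefore lies in $A_n\le S_n$; being fixed-point-free and consisting of a single cycle, it is semiregular. Thus every quasiprimitive group of degree $n$ is non-elusive. Feeding this into \cite[Corollary~4.9]{DobsonM2011}, whose hypotheses require $n$ to be square-free and which reduces the existence of semiregular elements in an arbitrary transitive group to the quasiprimitive case, I would conclude that every transitive group of degree $n$ contains a semiregular element.

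The substantive content of the argument is carried entirely by \cite[Corollary~4.9]{DobsonM2011}, so the only genuinely new checks are that $A_n$ and $S_n$ contain semiregular elements, which is immediate from $n$ being odd, and that the square-freeness hypothesis of the cited corollary is met, which holds since $n$ is a product of distinct primes. I do not anticipate a real obstacle here; the one point requiring care is that Theorem~\ref{maintool} is stated only for composite $n$, so the prime case must be dispatched separately by the elementary Cauchy argument above.
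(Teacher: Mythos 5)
Your proposal is correct and follows essentially the same route as the paper, which proves the corollary simply by combining Theorem~\ref{maintool} with \cite[Corollary 4.9]{DobsonM2011}. The extra details you supply (the Cauchy argument for prime degree and the observation that $A_n$ and $S_n$ contain an $n$-cycle, hence a semiregular element, since $n$ is odd) are sound and merely make explicit what the paper leaves to the cited corollary.
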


\providecommand{\bysame}{\leavevmode\hbox to3em{\hrulefill}\thinspace}
\providecommand{\MR}{\relax\ifhmode\unskip\space\fi MR }
\providecommand{\MRhref}[2]{%
  \href{http://www.ams.org/mathscinet-getitem?mr=#1}{#2}
}
\providecommand{\href}[2]{#2}


\end{document}